\newtheorem{theorem}{Theorem}[section]
\newtheorem{lemma}[theorem]{Lemma}
\newcommand{\bfe}{{\boldsymbol e}}
\newcommand{\bfei}{\bfe^{(i)}}
\newcommand{\bfn}{{\boldsymbol n}}
\newcommand{\bfu}{{\boldsymbol u}}
\newcommand{\bfv}{{\boldsymbol v}}
\newcommand{\bfx}{{\boldsymbol x}}
\newcommand{\bftau}{{\boldsymbol \tau}}
\newcommand{\Ent}[1]{{\lfloor #1 \rfloor}}
\newcommand{\dv}{\partial}
\newcommand{\lapi}{{\boldsymbol \Delta}}
\newcommand{\gradi}{{\boldsymbol \nabla}}
\newcommand{\dive}{{\rm div}}
\newcommand{\divv}{\boldsymbol{\rm div}}
\newcommand{\xN}{\mathbb{N}}
\newcommand{\xR}{\mathbb{R}}
\newcommand{\xL}{{\rm L}}
\newcommand{\xH}{{\rm H}}
\newcommand{\dx}{\,\mathrm{d}\bfx}
\newcommand{\norm}[1]{|\hspace{-.1em}| #1 |\hspace{-.1em}|}
\newcommand{\brok}{{1,\mesh}}
\newcommand{\edge}{\sigma}
\newcommand{\edges}{\mathcal{E}}
\newcommand{\edgesint}{{\mathcal E}_{{\rm int}}}
\newcommand{\edgesext}{{\mathcal E}_{{\rm ext}}}
\newcommand{\mesh}{{\mathcal M}}
\newcommand{\fluxK}{F_{K,\edge}}
\newcommand{\edged}{\varepsilon}
\newcommand{\edgeperp}{\tau}
\newcommand{\edgesd}{\bar {\mathcal E}}
\newcommand{\fluxd}{F_{\edge,\edged}}
\newcommand{\Ds}{{\scalebox{0.6}{$D_\edge$}}}
\newcommand{\ei}{^{(i)}}
\newcommand{\ie}{\emph{i.e.}}
\newcommand{\exm}{^{(m)}}
\newcommand{\ma}{\varepsilon}
\newcommand{\mcal}[1]{\mathcal{#1}}
\newcommand{\ds}{\displaystyle}
\newcommand{\deltap}{{\delta \hspace{-0.1em} p}}
\def\Ind{\mathds{1}}
\begin{document}
\title[Low Mach number limit of a pressure correction MAC scheme for barotropic flows]
{Low Mach number limit of a pressure correction MAC scheme for compressible barotropic flows}

\author{R. Herbin}
\address{I2M UMR 7373, Aix-Marseille Universit\'e, CNRS, \'Ecole Centrale de Marseille.}
\email{raphaele.herbin@univ-amu.fr}

\author{J.-C. Latch\'e}
\address{Institut de Radioprotection et de S\^uret\'e Nucl\'eaire (IRSN), Saint-Paul-lez-Durance, 13115, France.}
\email{jean-claude.latche@irsn.fr}

\author{K. Saleh}
\address{Universit\'e de Lyon, CNRS UMR 5208, Universit\'e Lyon 1, Institut Camille Jordan. 43 bd 11 novembre 1918; F-69622 Villeurbanne cedex, France.}
\email{saleh@math.univ-lyon1.fr}

\subjclass[2000]{35Q30,65N12,76M12}
\keywords{Compressible Navier-Stokes equations, low Mach number flows, finite volumes, MAC scheme, staggered discretizations.}

\begin{abstract}
We study the incompressible limit of a pressure correction MAC scheme \cite{her-14-cons} for the unstationary compressible barotropic Navier-Stokes equations.  Provided the initial data are well-prepared, the solution of the numerical scheme converges, as the Mach number tends to zero, towards the solution of the classical pressure correction \emph{inf-sup} stable MAC scheme for the incompressible Navier-Stokes equations.
\end{abstract}
\maketitle
%
%---------------------------------------------------------------------------------------
%
\section{Introduction} \label{sec:intro}

Let $\Omega$ be parallelepiped of $\xR^d$, with $d\in\lbrace2,3\rbrace$ and $T>0$.  
The unsteady barotropic compressible Navier-Stokes equations, parametrized by the Mach number $\ma$, read for $(\bfx,t)\in\Omega\times (0,T)$: 
\begin{subequations}
\begin{align}
\label{eq:pb_mass} &\partial_t \rho^\ma + \dive( \rho^\ma\, \bfu^\ma) = 0, \displaybreak[1] \\
\label{eq:pb_mom}  & \partial_t (\rho^\ma\, \bfu^\ma) + \divv(\rho^\ma\, \bfu^\ma \otimes \bfu^\ma) 
 - \divv(\bftau(\bfu^\ma)) + \frac 1 {\ma^2}\ \gradi \wp(\rho^\ma) = 0, \displaybreak[1] \\ 
\label{eq:pb_CI}   &  \bfu^\ma|_{\dv\Omega}=0, \qquad  \rho^\ma|_{t=0} = \rho_0^\ma, \qquad \bfu^\ma|_{t=0}=\bfu_0^\ma,
\end{align} \label{eq:pb}\end{subequations}
where $\rho^\ma>0$ and $\bfu^\ma=(u_1^\ma,..,u_d^\ma)^T$ are the density and velocity of the fluid.
The pressure satisfies the ideal gas law $\wp(\rho^\ma)= (\rho^\ma)^\gamma$, with $\gamma \geq 1$, and  
\[
 \divv(\bftau(\bfu)) = \mu \Delta \bfu+ (\mu+\lambda)\gradi (\dive \,\bfu),
\]
where the real numbers $\mu$ and $\lambda$ satisfy $\mu>0$ and $\mu+\lambda>0$.
The smooth solutions of \eqref{eq:pb} are known to satisfy a kinetic energy balance and a renormalization identity.
In addition, under assumption on the initial data, it may be inferred from these estimates that the density $\rho^\ma$  tends to a constant $\bar \rho$, and the velocity tends, in a sense to be defined, to a solution $\bar u$ of the incompressible Navier-Stokes equations \cite{lio-98-inc}:
\begin{subequations}
\begin{align}
  &\dive \bar \bfu = 0, \\
  &\bar \rho \dv_t \bar \bfu + \bar \rho \divv(\bar \bfu \otimes \bar \bfu)- \mu \lapi \bar \bfu + \gradi \pi = 0,
\end{align}\label{eq:pb_inc}\end{subequations}
where $\pi$ is the formal limit of $(\wp(\rho^\ma)-\wp(\bar \rho))/\ma^2$.

\medskip
In this paper, we reproduce this theory for a pressure correction scheme, based on the Marker-And-Cell (MAC) space discretization: we first derive discrete analogues of the kinetic energy and renormalization identities, then establish from these relations that approximate solutions of \eqref{eq:pb} converge, as $\ma\to 0$, towards the solution of the classical projection scheme for the incompressible Navier-Stokes equations \eqref{eq:pb_inc}.

\medskip
For this asymptotic analysis, we assume that the initial data is ``well prepared": $\rho_0^\ma >0$, $\rho_0^\ma \in \xL^\infty(\Omega)$, $\bfu_0^\ma \in \xH_0^1(\Omega)^d$ and, taking without loss of generality $\bar \rho=1$, there exists $C$ independent of $\ma$ such that: 
\begin{equation} \label{eq:u0_rho0_wp}
\norm{\bfu_0^\ma}_{\xH^1(\Omega)^d}  +  \frac{1}{\ma}\, \norm{\dive \, \bfu_0^\ma}_{\xL^2(\Omega)}
+ \frac{1}{\ma^2}\,\norm{\rho^\ma_0- 1}_{\xL^\infty(\Omega)}  \leq  C.
\end{equation}
Consequently, $\rho_0^\ma$ tends to $1$ when $\ma\to 0$; moreover, we suppose that $\bfu_0^\ma$ converges in $\xL^2(\Omega)^d$ towards a function $\bar \bfu_0\in\xL^2(\Omega)^d$ (the uniform boundedness of the sequence in the $\xH^1(\Omega)^d$ norm already implies this convergence up to a subsequence).
%
%---------------------------------------------------------------------------------------
%
\section{The numerical scheme}

\begin{figure}[tb]
\begin{center}
\begin{tikzpicture}

% dual cells:
\fill[blue!15!white] (0.5,1.25)--(4.5,1.25)--(4.5,2)--(0.5,2)--cycle;
\path node at (0.6,1.6) [anchor= west]{\textcolor{blue!50!black}{$D_{L,\edge}$}};
\fill[green!20!white] (0.5,2)--(4.5,2)--(4.5,2.8)--(0.5,2.8)--cycle;
\path node at (0.6,2.4) [anchor= west]{\textcolor{green!50!black}{$D_{K,\edge}$}};

% primal mesh:
\draw[very thin] (0.25,0.5)--(4.75,0.5);
\draw[very thin] (0.25,2.)--(4.75,2.);
\draw[very thin] (0.25,3.6)--(4.75,3.6);
\draw[very thin] (0.5,0.25)--(0.5,3.85);
\draw[very thin] (4.5,0.25)--(4.5,3.85);
\draw[very thick] (0.5,2.)--(4.5,2.); \path node at (3.7,2.2) {$\edge=K|L$};
\draw[very thick] (0.5,3.6)--(4.5,3.6); \path node at (3.88,3.4) {$\edge'$};
\path node at (0.6,3.4) [anchor= west]{$K$};
\path node at (0.6,0.7) [anchor= west]{$L$};

% dual edges:
\draw[very thick, red] (0.5,2.8)--(4.5,2.8); \path node at (2.5,3) {\textcolor{red}{$\edged=\edge|\edge'$}};

% misc
\path node at (6,3.6) [anchor= west]{\begin{minipage}{0.4\textwidth} primal cells: $K$, $L$. \end{minipage}};
\path node at (6,2.9) [anchor= west]{\begin{minipage}{0.4\textwidth} dual cell for the $y$-component of the velocity: $D_\edge=D_{K,\edge}\cup D_{L,\edge}$. \end{minipage}};
\path node at (6,2) [anchor= west]{\begin{minipage}{0.4\textwidth} primal and dual cells $d$-dimensional measures: $|K|$, $|D_\edge|$, $|D_{K,\edge}|$. \end{minipage}};
\path node at (6,1.1) [anchor= west]{\begin{minipage}{0.4\textwidth} faces $(d-1)$-dimensional measures: $|\edge|$, $|\edged|$. \end{minipage}};
\path node at (6,0.3) [anchor= west]{\begin{minipage}{0.4\textwidth} vector normal to $\edge$ outward $K$: $\bfn_{K,\edge}$. \end{minipage}};
\end{tikzpicture}
\end{center}

\caption{Notations for control volumes and faces. \label{fig:mesh}}
\end{figure}

Let $\mesh$ be a MAC mesh (see e.g. \cite{ghlm-17-macns}  and Figure \ref{fig:mesh} for the notations).
The  discrete density unknowns are associated with the cells of the mesh $\mesh$, and are denoted by $\big\{\rho_K,\ K \in \mesh \big\}$.
We denote by $\edges$ the set of the faces of the mesh, and by $\edges\ei$ the subset of the faces orthogonal to the $i$-th vector of the canonical basis of $\xR^d$.
The discrete $i^{th}$ component of the velocity is located at the centre of the faces $\edge \in \edges \ei$, so the whole set of discrete velocity unknowns reads $\big\{ u_{\edge,i},\ \edge \in \edges\ei,  1 \leq i \leq d \big\}$.
We define $\edgesext=\{\edge\in\edges, \edge\subset \partial \Omega \}$, $\edgesint=\edges \setminus \edgesext$, $\edgesint\ei=\edgesint \cap  \edges\ei$ and $\edgesext\ei=\edgesext \cap  \edges\ei$.
The boundary conditions \eqref{eq:pb_CI} are taken into account by setting $u_{\edge,i}=0$ for all $\edge \in \edgesext\ei$, $1 \leq i \leq d$.
Let $\delta t>0$ be a constant time step. 
The approximate solution $(\rho^n,\bfu^n)$ at time $t_n=n\delta t$ for $1\leq n\leq N=\Ent{T/\delta t}$ is computed as follows: knowing $\{\rho_K^{n-1}, \rho_K^{n}, K\in\mesh\} \subset \xR$ and $(u_{\edge,i}^{n})_{\edge\in\edgesint\ei,1\leq i\leq d}\subset\xR$, find $(\rho_K^{n+1})_{K\in\mesh} \subset \xR$ and $(u_{\edge,i}^{n+1})_{\edge\in\edgesint\ei, 1\leq i\leq d}\subset\xR$ by the following algorithm:

\begin{subequations}\label{eq:correction_scheme}
\begin{align}
\nonumber &
\mbox{{\bf Pressure gradient scaling step}:}
\\ \label{eq:sch_scale} &  \displaystyle \hspace{5ex}
\mbox{For } 1 \leq i \leq d,\ \forall \edge \in \edgesint\ei, \quad
(\overline{\gradi p})^{n}_{\edge,i} = \Bigl(\frac{\rho^n_\Ds}{\rho^{n-1}_\Ds}\Bigr)^{1/2}  (\gradi p^n)_{\edge,i}.
\displaybreak[1]\\ \nonumber &
\mbox{{\bf Prediction step} -- Solve for $\tilde \bfu^{n+1}$:}
\\ \nonumber & \hspace{5ex}
\mbox{For } 1 \leq i \leq d,\ \forall \edge \in \edgesint\ei,
\\ \label{eq:sch_mom} & \hspace{5ex}
\dfrac 1 {\delta t} \bigl(\rho^n_\Ds \tilde u^{n+1}_{\edge,i}-\rho^{n-1}_\Ds u_{\edge,i}^n \bigr)
\!+ \!\dive(\rho^n \tilde u^{n+1}_i \bfu^n)_{\edge}
\!- \!\dive \bftau(\tilde \bfu^{n+1})_{\edge,i}
\!+ \!\dfrac{1}{\ma^2}(\overline {\gradi p})^{n}_{\edge,i}
\!=0.
\displaybreak[1] \\[0.5ex]  \nonumber &
\mbox{{\bf Correction step} -- Solve for $\rho^{n+1}$ and $\bfu^{n+1}$:}
\\ \nonumber & \hspace{5ex}
\mbox{For } 1 \leq i \leq d,\ 
 \forall \edge \in \edgesint\ei,
\\[0.5ex] \label{eq:sch_cor} & \displaystyle \phantom{\hspace{5ex} \forall K \in \mesh, \quad}
\dfrac 1 {\delta t}\ \rho^n_\Ds\ (u^{n+1}_{\edge,i}-\tilde u_{\edge,i}^{n+1})
+ \dfrac{1}{\ma^2}\,(\gradi p^{n+1})_{\edge,i}- \dfrac{1}{\ma^2}\,(\overline{\gradi p})_{\edge,i}^{n}
=0,
\\ \label{eq:sch_mass} & \hspace{5ex}
\forall K \in \mesh, \quad
\dfrac 1 {\delta t}(\rho^{n+1}_K-\rho^n_K) + \dive(\rho^{n+1} \bfu^{n+1})_K = 0,
\\ \label{eq:sch_eos} & \hspace{5ex}
\forall K \in \mesh, \quad p_K^{n+1}=\wp(\rho^{n+1}),
\end{align}
\end{subequations}
where the discrete densities and space operators are defined below (see also \cite{her-14-cons,gra-15-unc}).

\medskip
\noindent {\bf Mass convection flux --}
Given a discrete density field $\rho = \lbrace \rho_K, \, K\in\mesh\rbrace$, and a velocity field $\bfu=\lbrace u_{\edge,i}, \, \edge \in \edges\ei, \, 1\leq i \leq d\rbrace$,  the convection term  in \eqref{eq:sch_mass} reads:
\begin{equation}
\label{div_mass}
\dive(\rho \bfu)_K = \frac 1 {|K|} \sum_{\edge \in\edges(K)} \fluxK(\rho,\bfu), \qquad K\in\mesh,
\end{equation}
where $\fluxK(\rho,\bfu)$ stands for the mass flux across $\edge$ outward $K$.
This flux is set to 0 on external faces to account for the homogeneous Dirichlet boundary conditions; it is given on internal faces by:
\begin{equation}\label{eq:def_FKedge}
\fluxK(\rho,\bfu)= |\edge|\ \rho_\edge\ u_{K,\edge}, \qquad  \edge\in\edgesint,\, \edge=K|L,
\end{equation}
where $u_{K,\edge}  = u_{\edge,i}\ \bfn_{K,\edge} \cdot \bfe\ei$, with $\bfe^{(i)}$ the $i$-th vector of the orthonormal basis of $\xR^d$.
The density at the face $\edge=K|L$ is approximated by the upwind technique, \ie\ $\rho_\edge=\rho_K$ if $u_{K,\edge} \geq 0$ and $\rho_\edge=\rho_L$ otherwise.

\medskip
\noindent {\bf Pressure gradient term --}
In \eqref{eq:sch_scale} and \eqref{eq:sch_cor}, the term $(\gradi p)_{\edge,i}$ stands for the $i^{th}$ component of the discrete pressure gradient at the face $\edge$.
Given a discrete density field $\rho = \lbrace \rho_K, \, K\in\mesh\rbrace$, this term is defined as:
\begin{equation}\label{eq:def_grad_p}
(\gradi p)_{\edge,i}=\frac{|\edge|}{|D_\edge|}  (\wp(\rho_L)-\wp(\rho_K))\ \bfn_{K,\edge} \cdot \bfe\ei,
\quad 1\leq i \leq d,\ \edge\in\edgesint\ei,\, \edge=K|L.
\end{equation}
Defining for all $K\in\mesh$, $(\dive \bfu)_K = \dive (1\times \bfu)_K$ (see \eqref{div_mass}), the following discrete duality relation holds for all discrete density and velocity fields $(\rho,\bfu)$:
\begin{equation} \label{eq:grad-div}
\sum_{K \in \mesh} |K|\wp(\rho_K)\ (\dive \bfu)_K
+\sum_{i=1}^d \sum_{\edge\in\edgesint\ei} |D_\edge|\ u_{\edge,i}\ (\gradi p)_{\edge,i}
=0.
\end{equation}
The MAC scheme is \textit{inf-sup} stable: there exists $\beta>0$, depending only on $\Omega$ and the regularity of the mesh, such that, for all $p=\lbrace p_K,\,K\in\mesh\rbrace$, there exists $\bfu = \lbrace u_{\edge,i}, \, \edge \in \edges\ei, \, 1\leq i \leq d\rbrace$ satisfying homogeneous Dirichlet boundary conditions with:
\[
\norm{\bfu}_\brok=1 \text{ and } \sum_{K\in\mesh} |K|\, p_K\,(\dive \bfu )_K \geq \beta\, \norm{p-\frac 1 {|\Omega|} \int_\Omega p \dx}_{L^2(\Omega)},
\]
where $\norm{\bfu}_\brok$ is the usual discrete $\xH^1$-norm of $\bfu$ (see \cite{ghlm-17-macns}).

\medskip
\noindent {\bf Velocity convection operator --} Given a density field $\rho = \lbrace \rho_K, \, K\in\mesh\rbrace$, and two velocity fields $\bfu=\lbrace u_{\edge,i}, \, \edge \in \edges\ei, \, 1\leq i \leq d\rbrace$ and $\bfv=\lbrace v_{\edge,i}, \, \edge \in \edges\ei, \, 1\leq i \leq d\rbrace$, we build for each $\edge\in\edgesint$ the following quantities:
\begin{itemize}
\item an approximation of the density on the dual cell $\rho_\Ds$ defined as:
\begin{equation}\label{eq:def_rho}
|D_\edge|\, \rho_\Ds = |D_{K,\edge}|\, \rho_K + |D_{L,\edge}|\, \rho_L, \qquad \edge\in\edgesint,\, \edge=K|L,
\end{equation}

\item a discrete divergence for the convection on the dual cell $D_\edge$:
\[
\dive(\rho v_i \bfu)_{\edge}= \sum_{\edged\in\edgesd(D_\edge)} \fluxd(\rho,\bfu)\ v_{i,\edged}, \qquad \edge \in \edgesint\ei, \, 1\leq i \leq d.
\]
For $i\in \{1,..,d \}$, and $\edge \in \edgesint \ei$, $\edge=K|L$,
\begin{list}{-}{\itemsep=0.ex \topsep=0.5ex \leftmargin=1.cm \labelwidth=0.7cm \labelsep=0.3cm \itemindent=0.cm}
\item  If the vector $\bfei$ is normal to $\edged$, $\edged$ is included in a primal cell $K$, and we denote by $\edge'$ the second face of $K$ which, in addition to $\edge$, is normal to $\bfei$.
We thus have $\edged=D_\edge | D_{\edge'}$.
Then the mass flux through $\edged$ is given by:
\begin{equation}\label{eq:flux_eK}
F_{\edge,\edged}(\rho, \bfu)  = \frac 1 2  \bigl( F_{K,\edge}(\rho,\bfu)\ \bfn_{D_\edge,\edged} \cdot \bfn_{K,\edge}
+ F_{K,\edge'}(\rho,\bfu)\ \bfn_{D_\edge,\edged} \cdot \bfn_{K,\edge'} \bigr).
\end{equation}
\item  If the vector $\bfei$ is tangent to $\edged$, $\edged$ is the union of the halves of two primal faces $\edgeperp$ and $\edgeperp'$ such that $\edgeperp\in \edges(K)$ and $\edgeperp' \in \edges(L)$.
The mass flux through $\edged$ is then given by:
\begin{equation}\label{eq:flux_eorth}
F_{\edge,\edged}(\rho, \bfu)  = \frac 1 2\ \bigl(F_{K,\edgeperp}(\rho,\bfu)+ F_{L,\edgeperp'}(\rho,\bfu) \bigr).
\end{equation}
\end{list}
\end{itemize}
With this definition, the dual fluxes are locally conservative through dual faces $\edged=D_\edge|D_{\edge'}$ (\ie\ $F_{\edge,\edged}(\rho, \bfu)=-F_{\edge',\edged}(\rho, \bfu)$), and vanish through a dual face included in the boundary of $\Omega$.
For this reason, the values $v_{\edged,i}$ are only needed at the internal dual faces, and are chosen centered, \ie, for $\edged=D_\edge|D_{\edge'}$, $v_{\edged,i}=(v_{\edge,i}+v_{\edge',i})/2$.

\medskip
As a result, a finite volume discretization of the mass balance \eqref{eq:pb_mass} holds over the internal dual cells. Indeed, if $ \rho^{n+1} = \lbrace \rho_K^{n+1}, \, K\in\mesh\rbrace$, $\rho^{n} = \lbrace \rho_K^{n}, \, K\in\mesh\rbrace$ and $\bfu^{n+1} = \lbrace u_{\edge,i}^{n+1}, \, \edge \in \edges\ei, \, 1\leq i \leq d\rbrace$ are density and velocity fields satisfying  \eqref{eq:sch_mass}, then, the dual quantities $\lbrace \rho_\Ds^{n+1}, \rho_\Ds^{n}, \, \edge \in \edgesint\rbrace$ and the dual fluxes $\lbrace \fluxd(\rho^{n+1},\bfu^{n+1}), \, \edge \in \edgesint, \, \edged \in \edgesd(D_\edge) \rbrace$ satisfy a finite volume discretization of the mass balance \eqref{eq:pb_mass} over the internal dual cells:
\begin{equation}\label{eq:mass_D}
\frac{|D_\edge|}{\delta t}  (\rho^{n+1}_\Ds-\rho^{n}_\Ds)
+ \sum_{\edged\in\edgesd(D_\edge)} \fluxd(\rho^{n+1},\bfu^{n+1})=0, \qquad  \edge\in\edgesint.
\end{equation}

\noindent {\bf Diffusion term --} The discrete diffusion term in \eqref{eq:sch_mom} is defined in \cite{gra-15-unc} and is coercive in the following sense: for every discrete velocity field $\bfu$ satisfying the homogeneous Dirichlet boundary conditions, one has:
\begin{equation}
\label{eq:diff_coerc}
-\sum_{i=1}^d \sum_{\edges \in \edgesint\ei} |D_\edge|\ u_{\edge,i}\ \dive \bftau(\bfu)_{\edge,i} \geq  \mu\ \norm{\bfu}_\brok^2.
\end{equation}

\medskip
The initialization of the scheme \eqref{eq:correction_scheme} is performed by setting 
\[
\displaystyle \forall K \in \mesh, \rho_K^{0} = \frac 1 {|K|} \int_K \rho_0^\ma(\bfx) \dx \mbox{ and } \forall \edge \in \edgesint\ei,  1\leq i\leq d,  \ u_{\edge,i}^0 = \frac 1 {|\edge|} \int_{\edge} \bfu_0^\ma(\bfx) \cdot \bfe\ei\dx,
\]
and computing $\rho^{-1}$ by solving the backward mass balance equation \eqref{eq:sch_mass} for $n=-1$ where the unknown is $\rho^{-1}$ and not $\rho^0$. This allows to perform the first prediction step with $\lbrace \rho_\Ds^{0}, \rho_\Ds^{-1}, \, \edge \in \edgesint\rbrace$ and the dual mass fluxes $\lbrace \fluxd(\rho^{0},\bfu^{0}), \, \edge \in \edgesint, \, \edged \in \edgesd(D_\edge) \rbrace$ satisfying the mass balance \eqref{eq:mass_D}.
Moreover, since $\rho_0^\ma>0$, one clearly has $\rho_K^{0} > 0$ for all $K\in\mesh$ and therefore $\rho_\Ds^{0}>0$ for all $\edge \in \edgesint$. The positivity of $\rho^{-1}$ is a consequence of the following Lemma.

\begin{lemma}
If  $(\rho_0^\ma,\bfu_0^\ma)$ satisfies \eqref{eq:u0_rho0_wp}, then there exists $C$, depending on the mesh but independent of $\ma$ such that:
\begin{equation} \label{eq:sch:init}
\frac{1}{\ma^2} \, \max\limits_{K\in\mesh}|\rho_K^0-1|  + \frac{1}{\ma^2} \, \max\limits_{1\leq i \leq d} \max\limits_{\edge\in\edgesint\ei}  \,|(\gradi p)^0_{\edge,i}| + \frac{1}{\ma} \, \max\limits_{K\in\mesh}|\rho_K^{-1}-1| \leq C.
\end{equation}
\end{lemma}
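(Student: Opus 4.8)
The plan is to bound the three contributions to \eqref{eq:sch:init} separately; I may restrict throughout to $\ma$ small enough, which is harmless since the whole analysis concerns the regime $\ma\to0$, and I write $C$ for a generic constant depending on the mesh (and on $\delta t$, $\gamma$, and the data through \eqref{eq:u0_rho0_wp}) but never on $\ma$. The first two terms come directly from the $\xL^\infty$ closeness of $\rho^0$ to $1$ contained in \eqref{eq:u0_rho0_wp}. Indeed, $\rho_K^0$ being the mean value of $\rho_0^\ma$ over $K$, one has $|\rho_K^0-1|\le |K|^{-1}\int_K|\rho_0^\ma-1|\dx\le \norm{\rho_0^\ma-1}_{\xL^\infty(\Omega)}\le C\ma^2$, which settles the first term and shows that, for $\ma$ small, every $\rho_K^0$ lies in $[1/2,3/2]$, an interval on which $\wp(s)=s^\gamma$ is Lipschitz with a constant $L=L(\gamma)$. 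Then, by the definition \eqref{eq:def_grad_p} of the discrete pressure gradient, for $\edge=K|L\in\edgesint\ei$ one gets $|(\gradi p)^0_{\edge,i}|=\frac{|\edge|}{|D_\edge|}\,|\wp(\rho_L^0)-\wp(\rho_K^0)|\le \frac{|\edge|}{|D_\edge|}\,L\,(|\rho_L^0-1|+|\rho_K^0-1|)\le \frac{|\edge|}{|D_\edge|}\,2LC\,\ma^2$, and since the ratio $|\edge|/|D_\edge|$ is bounded in terms of the mesh regularity only, the second term is $\le C\ma^2$ as well.

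For the last term, I would use that, by construction, $\rho^{-1}$ is obtained from $\rho^0$ by one step of the backward mass balance: \eqref{eq:sch_mass} written for $n=-1$ reads $\rho_K^{-1}=\rho_K^0+\delta t\,\dive(\rho^0\bfu^0)_K$. Using \eqref{div_mass}--\eqref{eq:def_FKedge} and inserting $\rho_\edge^0=1+(\rho_\edge^0-1)$ at each face, I split $\dive(\rho^0\bfu^0)_K=(\dive\bfu^0)_K+|K|^{-1}\sum_{\edge\in\edges(K)}|\edge|\,(\rho_\edge^0-1)\,u_{K,\edge}^0$. For the first summand, the faces of $K$ being orthogonal to the basis vectors, $u_{\edge,i}^0$ the mean of $\bfu_0^\ma\cdot\bfei$ over $\edge$, and $\bfu_0^\ma$ having zero trace on $\partial\Omega$, the divergence theorem gives the exact identity $(\dive\bfu^0)_K=|K|^{-1}\int_K\dive\bfu_0^\ma\dx$, whence by Cauchy--Schwarz and \eqref{eq:u0_rho0_wp} one has $|(\dive\bfu^0)_K|\le |K|^{-1/2}\norm{\dive\bfu_0^\ma}_{\xL^2(\Omega)}\le C\ma$. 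For the second summand, the upwind value $\rho_\edge^0$ equals $\rho_K^0$ or $\rho_L^0$, so $|\rho_\edge^0-1|\le C\ma^2$, while $|u_{K,\edge}^0|=|u_{\edge,i}^0|\le |\edge|^{-1}\int_\edge|\bfu_0^\ma\cdot\bfei|\dedge\le C\,\norm{\bfu_0^\ma}_{\xH^1(\Omega)^d}\le C$ by the trace theorem and \eqref{eq:u0_rho0_wp}, so this summand is $O(\ma^2)$. Collecting everything, $|\rho_K^{-1}-1|\le|\rho_K^0-1|+\delta t\,|\dive(\rho^0\bfu^0)_K|\le C\ma^2+C\ma\le C\ma$, which gives the bound on the third term and finishes the proof (and incidentally shows $\rho_K^{-1}\ge 1/2>0$ for $\ma$ small, as used afterwards).

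\textbf{Expected difficulty.} I do not anticipate a real obstacle: once the two estimates in \eqref{eq:u0_rho0_wp} are available the argument is essentially bookkeeping. The points requiring some care are (i) checking that each auxiliary constant — the Lipschitz constant of $\wp$ near $1$, the geometric ratios $|\edge|/|D_\edge|$, the face trace constants, the negative powers of $|K|$ — is independent of $\ma$ (which is exactly why restricting to $\ma$ small, so that $\rho^0$ remains in a fixed compact subset of $(0,\infty)$, is convenient), and (ii) justifying the exact discrete--continuous identity $(\dive\bfu^0)_K=|K|^{-1}\int_K\dive\bfu_0^\ma\dx$, which uses both the MAC geometry and the homogeneous Dirichlet condition satisfied by $\bfu_0^\ma$.
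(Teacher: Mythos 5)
Your proof is correct and follows essentially the same route as the paper's (sketched) proof: the first two terms come from the $\xL^\infty$ closeness of $\rho^0$ to $1$, and the third from the backward mass balance, splitting $\dive(\rho^0\bfu^0)_K$ into an $O(\ma)$ pure-divergence part and an $O(\ma^2)$ density-fluctuation part. The only (immaterial) difference is that you expand $\rho_\edge^0$ around $1$ where the paper expands it around $\rho_K^0$; your write-up simply supplies details the paper leaves implicit, notably the exact identity $(\dive\bfu^0)_K=|K|^{-1}\int_K\dive\bfu_0^\ma\dx$.
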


\begin{proof}
We sketch the proof. The boundedness of the first two terms is a straightforward consequence of \eqref{eq:u0_rho0_wp}. For the third term we remark that, again by \eqref{eq:u0_rho0_wp}:
 \[
\forall K \in \mesh, \quad  \rho_K^{-1}-1 = \underbrace{\rho_K^{0}-1}_{= \mcal{O}(\ma^2)} \ + \ \underbrace{\delta t \, \rho_K^0 (\dive \bfu^0)_K}_{= \mcal{O}(\ma)} \ + \ \underbrace{\delta t \sum_{\edge\in\edges(K)} \frac{|\edge|}{|K|} (\rho^0_\edge - \rho_K^{0}) \bfu_{K,\edge}^0}_{= \mcal{O}(\ma^2)}.
 \]
 \end{proof}

%
%---------------------------------------------------------------------------------------
%
\section{Asymptotic analysis of the zero Mach limit} \label{sec:cscheme}

By the results of \cite{her-14-cons},  there exists a solution $(\rho^n,\bfu^n)_{0\leq n\leq N}$ to the scheme \eqref{eq:correction_scheme} and any solution satisfies the following relations:

\textbullet \, a discrete kinetic energy balance: \
for all $\edge \in \edgesint\ei$,  $1 \leq i \leq d$, $0\leq n \leq N-1$:
\begin{multline}
\label{kin-eq}
\dfrac 1 {2\delta t} \Bigl(\rho_\Ds^{n}\,|u_{\edge,i}^{n+1}|^2 - \rho_\Ds^{n-1}\,|u_{\edge,i}^{n}|^2 \Bigr) 
+ \frac{1}{2|D_\edge|} \sum_{\substack{\edged \in \edgesd(D_\edge)\\ \edged=D_\edge|D_{\edge'}}} \hspace{-2ex}
\fluxd(\rho^{n},\bfu^{n}) \,\tilde u_{\edge,i}^{n+1}\,\tilde u_{\edge',i}^{n+1}
\\
- \dive \bftau(\tilde \bfu^{n+1})_{\edge,i}\, \tilde u_{\edge,i}^{n+1}
+ \frac{1}{\ma^2}(\gradi p)_{\edge,i}^{n+1} \, u_{\edge,i}^{n+1}
+ \frac{\delta t}{\ma^4}  \Bigl( \frac{| (\gradi p)^{n+1}_{\edge,i}|^2}{2\,\rho^n_\Ds} -\frac{| (\gradi p)^{n}_{\edge,i} |^2}{2\,\rho^{n-1}_\Ds}\Bigr)
\\
+ R_{\edge,i}^{n+1}=0, \qquad \mbox{with } R_{\edge,i}^{n+1}= \dfrac{1}{2\delta t} \rho_\Ds^{n-1}(\tilde u_{\edge,i}^{n+1}-u_{\edge,i}^{n})^2.
\end{multline}

\textbullet \, a discrete renormalization identity: \ for all $K\in\mesh$, $0\leq n \leq N-1$:
\begin{equation}
\label{renorm-eq}
\dfrac 1 {\delta t} \Bigl( \Pi_\gamma(\rho^{n+1}_K)-\Pi_\gamma(\rho^n_K) \Bigr)
+ \dive \bigl(b_\gamma(\rho^{n+1}) \bfu^{n+1} -b_\gamma'(1)\rho^{n+1}\bfu^{n+1} \bigr)_K   +  p_K^{n+1} \, \dive( \bfu^{n+1})_K + R_K^{n+1} = 0,
\end{equation}
with $R_K^{n+1}\geq 0$, where the function $b_\gamma$ is defined by $b_\gamma(\rho)=\rho\log\rho$ if  $\gamma=1$, $b_\gamma(\rho)=\rho^\gamma/(\gamma-1)$ if $\gamma>1$ and satisfies $\rho b_\gamma'(\rho)-b_\gamma(\rho)=\rho^\gamma= \wp(\rho)$ for all $\rho>0$, and $\Pi_\gamma(\rho)=b_\gamma(\rho)-b_\gamma(1) - b_{\gamma}'(1)(\rho-1)$.

\medskip
Summing \eqref{kin-eq} and \eqref{renorm-eq} over the primal cells from one side, and over the dual cells and the components on the other side, and invoking the grad-div duality relation \eqref{eq:grad-div}, we obtain a local-in-time discrete entropy inequality, for $0\leq n \leq N-1$:
\begin{multline}\label{sch_loc_ener}
\frac 1 2 \!\sum_{i =1}^d \!\! \sum_{\edge\in \edgesint\ei} \!|D_\edge| \!\Big (\rho^{n}_\Ds\ |u^{n+1}_{\edge,i}|^2 \!-  \rho^{n-1}_\Ds\ |u^n_{\edge,i}|^2 \Big ) \! +\! \frac 1 {\ma^2}\! \! \sum_{K\in \mesh}\!\!|K| \! \Big ( \Pi_\gamma(\rho^{n+1}_K) - \Pi_\gamma(\rho^{n}_K) \Big )
\\
\!+\!\mu  \delta t  \norm{\tilde \bfu^{n+1}}_{1,\mesh}^2 \!+ \!\frac 1 {\ma^4} \!\sum_{i =1}^d \!\!\sum_{\edge\in \edgesint\ei} \!\! |D_\edge| \delta t^2   \Bigl(\!\frac{| (\gradi p)^{n+1}_{\edge,i} |^2}{2\,\rho^n_\Ds}   -\frac{| (\gradi p)^{n}_{\edge,i} |^2}{2\,\rho^{n-1}_\Ds}\!\Bigr) \!\!+ \!\mathcal{R}^{n+1}\!\! \leq 0
\end{multline}
where $\displaystyle \mathcal{R}^{n+1} =\sum_{i=1}^d \sum_{\edge \in \edgesint\ei} R^{n+1}_{\edge,i} + \frac 1 {\ma^2} \sum_{K \in \mesh} R_K^{n+1}\geq 0$.

\medskip
The function $\Pi_\gamma$ has some important properties:
\begin{subequations}
 \begin{align}
    & \label{upperPi} \bullet \ \mbox{For all $\gamma \geq 1$ there exists $C_\gamma$ such that:} \  \Pi_\gamma(\rho) \leq C_\gamma \,|\rho-1|^2,\ \forall\rho\in (0,2). \displaybreak[1] \\
   & \label{lowerPi1} \bullet \  \mbox{If $\gamma \geq 2$ then} \ \Pi_\gamma(\rho) \geq |\rho-1|^2, \ \forall \rho>0. \displaybreak[1] \\
   &  \label{lowerPi2} \begin{array}{l}
      \bullet \ \mbox{If $\gamma \in [1,2)$ then for all $R\in(2,+\infty)$, there exists $C_{\gamma,R}$ such that:} \displaybreak[1] \\
      \hspace{2cm} \begin{array}{ll}
      \Pi_\gamma(\rho) \geq C_{\gamma,R}   |\rho-1|^2,     & \ \forall\rho\in (0,R), \\ 
      \Pi_\gamma(\rho) \geq C_{\gamma,R}   |\rho-1|^\gamma, & \ \forall  \rho\in [R,\infty).
      \end{array} 
      \end{array}
\end{align}
\end{subequations}

\begin{lemma}[Global discrete entropy inequality]
\label{lmm:global_entropy}
Under assumption \eqref{eq:u0_rho0_wp}, there exists $C_0>0$ independent of $\ma$ such that the solution $(\rho^n,\bfu^n)_{0\leq n\leq N}$ to the scheme \eqref{eq:correction_scheme} satisfies, for $\ma$ small enough, and for $1\leq n\leq N$:
\begin{multline} \label{eq:corr_stap_bis}\hspace{7ex}
\frac 1 2 \sum_{i =1}^d \sum_{\edge\in \edgesint\ei} |D_\edge| \rho^{n-1}_\Ds\ |u^n_{\edge,i}|^2
+\mu\  \sum_{k=1}^n \delta t\ \norm{\tilde \bfu^k}_{1,\mesh}^2
\\  
+\frac 1 {\ma^2} \sum_{K\in \mesh}|K| \, \Pi_\gamma(\rho^n_K)
+ \frac 1 {\ma^4}  \sum_{i =1}^d \sum_{\edge\in\edgesint\ei} \frac{|D_\edge|\ \delta t^2}{2\,\rho^{n-1}_\Ds}\ |(\gradi p)^n_{\edge,i}|^2
\leq C_0.
\hspace{7ex}\end{multline}
\end{lemma}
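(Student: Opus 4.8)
The plan is to obtain \eqref{eq:corr_stap_bis} from the local-in-time entropy inequality \eqref{sch_loc_ener} by a telescoping summation in time, and then to bound the surviving initial-data contribution by a constant independent of $\ma$, using the initialization estimate \eqref{eq:sch:init} together with the well-preparedness assumption \eqref{eq:u0_rho0_wp}.

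First I would fix $n$ with $1\le n\le N$ and sum \eqref{sch_loc_ener} over the time steps $k=0,1,\dots,n-1$. The first, second and fourth sums on the left-hand side are in telescoping form, and since each remainder satisfies $\mathcal R^{k}\ge 0$, discarding $\sum_{k=1}^{n}\mathcal R^{k}$ leaves exactly the left-hand side of \eqref{eq:corr_stap_bis}; the right-hand side collapses to the single quantity evaluated at the initial time,
\begin{multline*}
A^\ma := \frac12\sum_{i=1}^d\sum_{\edge\in\edgesint\ei}|D_\edge|\,\rho^{-1}_\Ds\,|u^0_{\edge,i}|^2 + \frac1{\ma^2}\sum_{K\in\mesh}|K|\,\Pi_\gamma(\rho^0_K) \\
+ \frac1{\ma^4}\sum_{i=1}^d\sum_{\edge\in\edgesint\ei}\frac{|D_\edge|\,\delta t^2}{2\,\rho^{-1}_\Ds}\,|(\gradi p)^0_{\edge,i}|^2 .
\end{multline*}
Since $A^\ma$ does not depend on $n$, it then suffices to show $A^\ma\le C_0$ for $\ma$ small enough, with $C_0$ independent of $\ma$.

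The preliminary point is a uniform two-sided control of the dual density at the fictitious time level: by \eqref{eq:sch:init} one has $\max_{K}|\rho^{-1}_K-1|\le C\ma$, with $C$ depending only on the mesh and the data, so there is $\ma_0>0$ such that $1/2\le\rho^{-1}_K\le 2$ for all $K\in\mesh$ whenever $\ma\le\ma_0$, and hence $1/2\le\rho^{-1}_\Ds\le 2$ for all $\edge\in\edgesint$ by the convex-combination definition \eqref{eq:def_rho} (in particular $\rho^{-1}>0$, which is what makes \eqref{sch_loc_ener} at $n=0$ and the initialization meaningful). With this in hand I would estimate the three terms of $A^\ma$ separately. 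The kinetic term is bounded, using $\rho^{-1}_\Ds\le2$, by a mesh-dependent constant times $\norm{\bfu_0^\ma}_{\xL^2(\Omega)^d}^2$, itself $\le C$ by \eqref{eq:u0_rho0_wp}. For the pressure term, \eqref{eq:sch:init} gives $|(\gradi p)^0_{\edge,i}|\le C\ma^2$, so the $\ma^{-4}$ prefactor is exactly compensated and, using $\rho^{-1}_\Ds\ge1/2$ and $\sum_{i}\sum_{\edge\in\edgesint\ei}|D_\edge|\le d\,|\Omega|$, this term is $\le C\,\delta t^2\,d\,|\Omega|$. For the internal-energy term, \eqref{eq:sch:init} also gives $|\rho^0_K-1|\le C\ma^2$, hence $\rho^0_K\in(0,2)$ for $\ma$ small, and the upper bound \eqref{upperPi} yields $\Pi_\gamma(\rho^0_K)\le C_\gamma|\rho^0_K-1|^2\le C_\gamma C^2\ma^4$, so that $\ma^{-2}\sum_{K}|K|\,\Pi_\gamma(\rho^0_K)\le C_\gamma C^2|\Omega|\,\ma^2$. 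Adding the three bounds gives $A^\ma\le C_0$, with $C_0$ depending only on $\Omega$, the mesh, $\delta t$, $\gamma$ and the constant $C$ of \eqref{eq:u0_rho0_wp}, but not on $\ma$, which proves the claim.

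The telescoping and the individual estimates are essentially routine; the step I expect to be the real \emph{main obstacle} is the one just described, namely securing the uniform-in-$\ma$ lower and upper bounds on $\rho^{-1}_\Ds$, since this density appears in the denominators of the kinetic-energy-type and pressure contributions of \eqref{sch_loc_ener} and \eqref{eq:corr_stap_bis}. This is precisely what the $\mathcal{O}(\ma)$ (rather than $\mathcal{O}(\ma^2)$) bound on $\rho^{-1}-1$ in \eqref{eq:sch:init} delivers, and it is also the origin of the ``$\ma$ small enough'' hypothesis in the statement; a secondary point to be careful about is checking that every constant entering $C_0$ really depends only on the fixed mesh and data, which is guaranteed by the way \eqref{eq:sch:init} and \eqref{eq:u0_rho0_wp} are stated.
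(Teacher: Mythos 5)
Your argument is, up to presentation, the paper's own proof: sum the local entropy inequality \eqref{sch_loc_ener} in time, discard the nonnegative remainders, and bound the surviving initial-data term using \eqref{eq:sch:init} (which gives $\rho^{-1}_\Ds\le 2$ for $\ma$ small and the $\mcal{O}(\ma^2)$ bounds on $\rho^0_K-1$ and $(\gradi p)^0_{\edge,i}$) together with \eqref{upperPi}. One small inaccuracy in your treatment of the kinetic term: the discrete values $u^0_{\edge,i}=|\edge|^{-1}\int_\edge \bfu_0^\ma\cdot\bfe\ei$ are face averages, and these are \emph{not} controlled by $\norm{\bfu_0^\ma}_{\xL^2(\Omega)^d}$ alone, even with mesh-dependent constants (traces of $\xL^2$ functions are not defined); as the paper does, you must invoke a trace inequality together with the uniform $\xH^1(\Omega)^d$ bound from \eqref{eq:u0_rho0_wp}, which you have available, so this is a one-line repair.
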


\begin{proof} 
Summing \eqref{sch_loc_ener} over $n$ yields the inequality \eqref{eq:corr_stap_bis} with
\begin{equation}
C_0= \frac 1 2 \sum_{i =1}^d \sum_{\edge\in \edgesint\ei} |D_\edge|  \rho^{-1}_\Ds\ |u^0_{\edge,i}|^2
+\frac 1 {\ma^2} \sum_{K\in \mesh}|K| \, \Pi_\gamma(\rho^0_K)
+ \frac 1 {\ma^4} \sum_{i =1}^d \sum_{\edge\in\edgesint\ei} \frac{|D_\edge|\ \delta t^2}{2\,\rho^{-1}_\Ds}\ |(\gradi p)^0_{\edge,i}|^2. 
\end{equation}
By \eqref{eq:sch:init}, for $\ma$ small enough, one has $\rho_K^{-1}\leq 2$ for all $K\in\mesh$ and therefore $\rho_\Ds^{-1}\leq 2$ for all $\edge\in\edgesint\ei$ and $1\leq i\leq d$. 
Hence, since $\bfu_0^\ma$ is uniformly bounded in $\xH^1(\Omega)^d$ by \eqref{eq:u0_rho0_wp}, a classical trace inequality yields the boundedness of the first term. Again by \eqref{eq:sch:init}, one has $|\rho_K^{0}-1|\leq C\ma^2$ for all $K\in\mesh$. 
Hence, by \eqref{upperPi}, the second term vanishes as $\ma\to 0$. The third term is also uniformly bounded with respect to $\ma$ thanks to \eqref{eq:sch:init}.
\end{proof}

\begin{lemma}[Control of the pressure]
\label{lmm:sch:controle:p}
Assume  that $(\rho_0^\ma,\bfu_0^\ma)$ satisfies \eqref{eq:u0_rho0_wp} and let $(\rho^n,\bfu^n)_{0\leq n\leq N}$ satisfy \eqref{eq:correction_scheme}. Let  $p^n=\wp(\rho^n)$ and define $\deltap^n = \lbrace \deltap_K^n,\,K\in\mesh \rbrace $ where $\deltap_K^n = (p_K^n-|\Omega|^{-1}\int_\Omega p^n \dx)/\ma^2$. Then, one has, for all $1\leq n \leq N$:
\[
\norm{\deltap^n} \leq C_{\mesh,\delta t},
\]
where  $C_{\mesh,\delta t} \ge 0$ depends on the mesh and $\delta t$ but not on $\ma$, and $\norm{\cdot}$ stands for any norm on the space of discrete functions.
\end{lemma}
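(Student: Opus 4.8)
The plan is to extract the bound from the two $\ma$-dependent dissipation terms in the global discrete entropy inequality \eqref{eq:corr_stap_bis} of Lemma~\ref{lmm:global_entropy}. The crucial — and slightly counter-intuitive — point is that the $\mathcal{O}(\ma^2)$ smallness of $p^n$ modulo its mean cannot be read off the potential term $\ma^{-2}\sum_{K}|K|\,\Pi_\gamma(\rho^n_K)$, which via \eqref{lowerPi1}--\eqref{lowerPi2} only yields $\norm{\rho^n-1}_{L^2}=\mathcal{O}(\ma)$, hence the useless $\norm{\deltap^n}=\mathcal{O}(\ma^{-1})$. Instead it must come from the pressure-gradient dissipation term $\ma^{-4}\sum_i\sum_{\edge\in\edgesint\ei}\tfrac{|D_\edge|\,\delta t^2}{2\rho^{n-1}_\Ds}|(\gradi p)^n_{\edge,i}|^2$, which controls the discrete gradient of $p^n$ at order $\ma^2$, combined with the inf--sup stability of the MAC scheme, which transfers this to a control of $p^n$ itself modulo additive constants.

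First I would use the potential term together with the superlinear growth of $\Pi_\gamma$ at infinity (valid for every $\gamma\geq1$) and the fact that $|K|$ is bounded below on the fixed mesh $\mesh$: from $\sum_{K}|K|\,\Pi_\gamma(\rho^n_K)\leq C_0\ma^2$, each $\Pi_\gamma(\rho^n_K)$ stays bounded, so the primal densities $\rho^n_K$ — and hence, by the convex-combination formula \eqref{eq:def_rho}, the dual densities $\rho^{n}_\Ds$ — are bounded above by a constant $\bar C_\mesh$ depending only on $\mesh$, uniformly in $n$ and in $\ma$ small; for the index $n-1=-1$ one invokes \eqref{eq:sch:init} instead. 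Plugging $\rho^{n-1}_\Ds\leq\bar C_\mesh$ into the pressure-gradient term of \eqref{eq:corr_stap_bis} yields
\[
\sum_{i=1}^d\sum_{\edge\in\edgesint\ei}|D_\edge|\,|(\gradi p)^n_{\edge,i}|^2 \;\leq\; \frac{2\bar C_\mesh}{\delta t^2}\sum_{i=1}^d\sum_{\edge\in\edgesint\ei}\frac{|D_\edge|\,\delta t^2}{2\rho^{n-1}_\Ds}\,|(\gradi p)^n_{\edge,i}|^2 \;\leq\; C_{\mesh,\delta t}\,\ma^4 .
\]

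Next I would invoke the inf--sup stability applied to $p^n$: there is a discrete velocity $\bfu$ with $\norm{\bfu}_\brok=1$ and $\sum_{K\in\mesh}|K|\,p^n_K\,(\dive\bfu)_K\geq\beta\,\norm{p^n-|\Omega|^{-1}\int_\Omega p^n\dx}_{L^2(\Omega)}$. Since $p^n_K=\wp(\rho^n_K)$ and $(\gradi p)^n$ is the discrete gradient of $\wp(\rho^n)$, the grad--div duality \eqref{eq:grad-div} rewrites the left-hand side as $-\sum_{i}\sum_{\edge\in\edgesint\ei}|D_\edge|\,u_{\edge,i}\,(\gradi p)^n_{\edge,i}$, which by discrete Cauchy--Schwarz, a discrete Poincar\'e inequality bounding the discrete $\xL^2$-norm of $\bfu$ by $C_\mesh\,\norm{\bfu}_\brok$, and the previous display, is at most $C_\mesh\big(\sum_i\sum_\edge|D_\edge||(\gradi p)^n_{\edge,i}|^2\big)^{1/2}\leq C_{\mesh,\delta t}\,\ma^2$. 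Dividing by $\ma^2$ gives $\norm{\deltap^n}_{L^2(\Omega)}\leq C_{\mesh,\delta t}$, and since the mesh and $N=\Ent{T/\delta t}$ are fixed, the underlying space of discrete functions is finite-dimensional, so this $\xL^2$ bound is equivalent to a bound in any norm, with an $\ma$-independent constant (for $\ma$ small, which is the range relevant to the asymptotic analysis). The main obstacle is really the combination of the first two steps: one must recognize that the correct $\mathcal{O}(\ma^2)$ estimate hides in the $\mathcal{O}(\ma^4)$ gradient dissipation rather than in the $\mathcal{O}(\ma^2)$ potential, and one needs the uniform upper bound on $\rho^{n-1}_\Ds$ precisely to neutralize the singular weight $1/\rho^{n-1}_\Ds$ before applying inf--sup stability.
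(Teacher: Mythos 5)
Your proof is correct and follows essentially the same route as the paper: extract the $\mathcal{O}(\ma^4)$ control of the discrete pressure gradient from the dissipation term in \eqref{eq:corr_stap_bis}, then convert it into an $\xL^2$ bound on $\deltap^n$ via the grad--div duality \eqref{eq:grad-div} and the \emph{inf-sup} stability, finishing by finite-dimensional norm equivalence. You additionally make explicit the uniform upper bound on $\rho^{n-1}_\Ds$ (needed to neutralize the weight $1/\rho^{n-1}_\Ds$), a detail the paper's proof leaves implicit.
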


\begin{proof}
By \eqref{eq:corr_stap_bis}, the discrete pressure gradient is controlled in $\xL^\infty$ by $C_{\mesh,\delta t}\, \ma^2$, so that $\gradi(\delta p^n)$ is bounded in any norm independently of $\ma$. 
Using the discrete $(\xH^{-1})^d$-norm (see e.g. \cite{ghlm-17-macns}), invoking the gradient divergence duality \eqref{eq:grad-div} and the \emph{inf-sup} stability of the scheme, $\norm{\gradi(\delta p^n)}_{-1,\mesh} \leq C_{\mesh,\delta t}$ implies that $\norm{\deltap^n}_{\xL^2} \leq \beta^{-1} C_{\mesh,\delta t}$.
\end{proof}
 
\begin{theorem}[Incompressible limit of the MAC pressure correction scheme]\ \\
Let $(\ma\exm)_{m\in \xN}$ be a sequence of positive real numbers tending to zero, and let $(\rho\exm,\bfu\exm)$ be a corresponding sequence of solutions of the scheme \eqref{eq:correction_scheme}.
Then the sequence $(\rho\exm)_{m \in \xN}$ converges to the constant function $\rho =1$ when $m$ tends to $+ \infty$ in $\xL^\infty((0,T),\xL^q(\Omega))$, for all $q \in [1,\min(\gamma,2)]$.

In addition, the sequence $(\bfu\exm,\deltap\exm)_{m \in \xN}$ tends, in any discrete norm, to the solution $(\bfu,\deltap)$ of the usual MAC pressure correction scheme for the incompressible Navier-Stokes equations, which reads:
\begin{subequations}\label{eq:limit_correction_scheme}
\begin{align*}
\nonumber &
\mbox{{\bf Prediction step} -- Solve for $\tilde \bfu^{n+1}$:}
\\ \nonumber & \qquad
\mbox{For } 1 \leq i \leq d,\ \forall \edge \in \edgesint\ei,
\qquad
\dfrac 1 {\delta t}\ \bigl(\tilde u^{n+1}_{\edge,i}- u_{\edge,i}^n \bigr)
+ \dive(\tilde u^{n+1}_i \bfu^n)_\edge
- \dive \bftau(\tilde \bfu^{n+1})_{\edge,i}
+ (\gradi (\delta p)^n)_{\edge,i}
=0.
\\[2ex] \nonumber &
\mbox{{\bf Correction step} -- Solve for $(\delta p)^{n+1}$ and $\bfu^{n+1}$:}
\\ \label{eq:limit_sch_cor} & \qquad
\mbox{For } 1 \leq i \leq d,\ 
 \forall \edge \in \edgesint\ei,
\qquad
\dfrac 1 {\delta t}\ (u^{n+1}_{\edge,i}-\tilde u_{\edge,i}^{n+1})
+ (\gradi (\delta p)^{n+1})_{\edge,i}- (\gradi (\delta p)^n)_{\edge,i}
=0,
\\ & \qquad
\forall K \in \mesh, \quad
\hspace{19ex}
\dive(\bfu^{n+1})_K = 0.
\end{align*}
\end{subequations}
\end{theorem}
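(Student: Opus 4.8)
\emph{Proof proposal.} The plan is to exploit that the mesh $\mesh$ and the time step $\delta t$ are fixed, so that all the discrete unknowns live in a fixed finite‑dimensional space: first I would squeeze the density to the constant $1$ directly from the entropy inequality, then extract a subsequence by compactness, pass to the limit term by term in \eqref{eq:correction_scheme}, and finally invoke uniqueness for the limit scheme \eqref{eq:limit_correction_scheme} to upgrade subsequential convergence into convergence of the whole sequence.

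\emph{Density.} From Lemma \ref{lmm:global_entropy}, $(\ma\exm)^{-2}\sum_{K}|K|\,\Pi_\gamma(\rho^{(m),n}_K)\le C_0$ for all $n$, hence $\sup_{n}\sum_K|K|\,\Pi_\gamma(\rho^{(m),n}_K)\le C_0(\ma\exm)^2\to 0$. I would then use the lower bounds \eqref{lowerPi1}--\eqref{lowerPi2}: for $\gamma\ge 2$ this controls $\norm{\rho^{(m),n}-1}_{\xL^2(\Omega)}^2$ directly, while for $\gamma\in[1,2)$ one splits the sum over $\{\rho<R\}$ and $\{\rho\ge R\}$ and applies a discrete Hölder inequality to bring the exponent down to any $q\le\min(\gamma,2)$. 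As the bounds are uniform in $n$, this yields $\norm{\rho\exm-1}_{\xL^\infty((0,T),\xL^q(\Omega))}\to 0$ for every such $q$. Moreover $\Pi_\gamma$ is continuous, has a unique zero at $1$, and is proper (one checks $\Pi_\gamma(0^+)=1$ and $\Pi_\gamma(+\infty)=+\infty$), so the same bound forces $\rho^{(m),n}_K\to1$, and in particular $\rho^{(m),n}_K,\rho^{(m),n}_\Ds\in[1/2,2]$ for $m$ large, uniformly in $n,K,\edge,i$.

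\emph{Bounds, compactness and passage to the limit.} From Lemma \ref{lmm:global_entropy} again, $\tilde\bfu^{(m),k}$ (via $\norm{\cdot}_{1,\mesh}$) and $\bfu^{(m),n}$ (using $\rho^{(m),n-1}_\Ds\ge 1/2$) are bounded independently of $m$, and by Lemma \ref{lmm:sch:controle:p} so is $\deltap^{(m),n}$. The key algebraic point is that $\ma^{-2}(\gradi p)^n_{\edge,i}=(\gradi\deltap^n)_{\edge,i}$ (since $\deltap^n_L-\deltap^n_K=(p^n_L-p^n_K)/\ma^2$) and $\ma^{-2}(\overline{\gradi p})^n_{\edge,i}=(\rho^n_\Ds/\rho^{n-1}_\Ds)^{1/2}(\gradi\deltap^n)_{\edge,i}$. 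Since everything lies in fixed finite‑dimensional spaces, Bolzano--Weierstrass lets me extract one subsequence (not relabelled) along which, for all $n\in\{0,\dots,N\}$, all $\edge$, all $i$, the quantities $u^{(m),n}_{\edge,i}$, $\tilde u^{(m),n}_{\edge,i}$, $\deltap^{(m),n}_K$ converge; we already know $\rho^{(m),n}_K\to1$, $\rho^{(m),n}_\Ds\to1$, so $(\gradi\deltap^{(m),n})\to(\gradi\deltap^n)$ and $\ma^{-2}(\overline{\gradi p})^{(m),n}\to(\gradi\deltap^n)$. For the initial data, $u^{(m),0}_{\edge,i}=|\edge|^{-1}\int_\edge\bfu_0^{\ma\exm}\cdot\bfe\ei\dedge\to|\edge|^{-1}\int_\edge\bar\bfu_0\cdot\bfe\ei\dedge=:\bar u^0_{\edge,i}$, because $\bfu_0^{\ma\exm}\rightharpoonup\bar\bfu_0$ weakly in $\xH^1(\Omega)^d$ (uniform $\xH^1$ bound plus strong $\xL^2$ convergence) and the face average is a bounded linear functional on $\xH^1(\Omega)^d$; and $\deltap^{(m),0}$ converges since its gradient is bounded by \eqref{eq:sch:init} and it has zero mean. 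I then pass to the limit termwise: in \eqref{eq:sch_mom}, $\delta t^{-1}(\rho^n_\Ds\tilde u^{n+1}_{\edge,i}-\rho^{n-1}_\Ds u^n_{\edge,i})\to\delta t^{-1}(\tilde u^{n+1}_{\edge,i}-u^n_{\edge,i})$, the convection term (a fixed finite combination of products $\rho^n_\edge\,u^n_{K,\edge}\,\tilde u^{n+1}_{i,\edged}$ through \eqref{eq:flux_eK}--\eqref{eq:flux_eorth}, with $\rho^n_\edge\to1$) converges to $\dive(\tilde u^{n+1}_i\bfu^n)_\edge$, the diffusion term is linear, and $\ma^{-2}(\overline{\gradi p})^n_{\edge,i}\to(\gradi\deltap^n)_{\edge,i}$; in \eqref{eq:sch_cor}, $\delta t^{-1}\rho^n_\Ds(u^{n+1}_{\edge,i}-\tilde u^{n+1}_{\edge,i})\to\delta t^{-1}(u^{n+1}_{\edge,i}-\tilde u^{n+1}_{\edge,i})$ and $\ma^{-2}(\gradi p^{n+1})_{\edge,i}-\ma^{-2}(\overline{\gradi p})^n_{\edge,i}\to(\gradi\deltap^{n+1})_{\edge,i}-(\gradi\deltap^n)_{\edge,i}$; and in \eqref{eq:sch_mass}, $\rho^{n+1}_K,\rho^n_K\to1$ kills the time‑derivative term while $\dive(\rho^{n+1}\bfu^{n+1})_K\to\dive(\bfu^{n+1})_K$, so $\dive(\bfu^{n+1})_K=0$. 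Thus $(\bfu^n,\deltap^n)_{0\le n\le N}$ (with $\bfu^0=\bar u^0$) solves \eqref{eq:limit_correction_scheme}. Since this scheme is a standard \emph{inf-sup} stable projection scheme and, by an immediate induction on $n$ (the prediction step is a well‑posed linear convection--diffusion system by the coercivity \eqref{eq:diff_coerc}, and the projection step fixes $\bfu^{n+1}$ and the zero‑mean pressure increment uniquely by \emph{inf-sup} stability), it has a unique solution, the limit does not depend on the subsequence; the usual ``every subsequence has a sub‑subsequence converging to the same limit'' argument then gives convergence of the whole sequence $(\bfu\exm,\deltap\exm)$, in any discrete norm by equivalence of norms in finite dimension.

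\emph{Main obstacle.} Because the mesh and the time step are fixed, compactness is essentially free; the substance lies in two places. First, the entropy inequality must be strong enough to force $\rho\exm\to1$ uniformly in time — this is what decouples all the density‑weighted nonlinearities — and this rests on the $\ma^{-2}$ weight in front of $\sum_K|K|\Pi_\gamma(\rho^n_K)$ together with the coercivity of $\Pi_\gamma$. Second, and conceptually the heart of the matter, one must show that the rescaled pressure gradient $\ma^{-2}(\gradi p)^{(m),n}$ does not blow up but converges to a genuine incompressible pressure gradient $\gradi\deltap^n$; this is exactly the role of the $\ma^{-4}$ term in \eqref{sch_loc_ener}/\eqref{eq:corr_stap_bis} and of the \emph{inf-sup}-based estimate of Lemma \ref{lmm:sch:controle:p}. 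A minor technicality is the convergence of the discrete initial velocity, handled by upgrading the $\xL^2$ convergence of $\bfu_0^\ma$ to weak $\xH^1$ convergence.
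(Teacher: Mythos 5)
Your proposal follows essentially the same route as the paper: the entropy estimate of Lemma \ref{lmm:global_entropy} combined with the coercivity properties \eqref{lowerPi1}--\eqref{lowerPi2} of $\Pi_\gamma$ forces $\rho\exm\to 1$ in $\xL^\infty((0,T),\xL^q(\Omega))$, then uniform bounds on $\bfu\exm$ and (via Lemma \ref{lmm:sch:controle:p}) on $\deltap\exm$ in the fixed finite-dimensional discrete spaces allow a Bolzano--Weierstrass extraction, a cell-by-cell passage to the limit in \eqref{eq:correction_scheme}, and the uniqueness of the solution of \eqref{eq:limit_correction_scheme} to upgrade to convergence of the whole sequence. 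Your version is correct and in fact spells out several details the paper leaves implicit (the identity $\ma^{-2}(\gradi p)^n=\gradi\deltap^n$, the lower bound $\rho_\Ds\geq 1/2$ needed to read off the velocity bound, and the convergence of the discrete initial data).
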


\begin{proof}
By \eqref{lowerPi1} and the global entropy estimate \eqref{eq:corr_stap_bis}, one has for $\gamma\geq 2$, 
\[
\norm{\rho\exm(t)-1}_{\xL^2(\Omega)}^2 \leq  \int_\Omega \Pi_\gamma(\rho\exm(t))\leq \,C_0\, \ma^2, \, \forall t\in(0,T).
\]
For $1\leq\gamma \leq 2$, invoking \eqref{lowerPi2} and estimate \eqref{eq:corr_stap_bis}, we obtain for all $t\in(0,T)$ and for all $R\in(2,+\infty)$:
\[
\begin{array}{ll}
(i) & \ds \quad  \norm{(\rho\exm(t)-1)\Ind_{\{ \rho^{\exm}(t)\leq R\}}}_{\xL^2(\Omega)}^2 \leq
\, \frac{1}{C_{\gamma,R}} \int_\Omega \Pi_\gamma(\rho^{\exm}(t))  \leq C \, \ma^2,  \forall t\in(0,T),
\\
(ii) & \ds \quad \norm{(\rho^{\exm}(t)-1)\Ind_{\{ \rho^{\exm}(t)\geq R \}} }_{\xL^\gamma(\Omega)}^\gamma \leq
\, \frac{1}{C_{\gamma,R}} \int_\Omega \Pi_\gamma(\rho^{\exm}(t))  \leq \, C \, \ma^2, \forall t\in(0,T),
\end{array}
\]
which proves the convergence of $(\rho\exm)_{m \in \xN}$ to the constant function $\rho =1$ as $m \to + \infty$ in $\xL^\infty((0,T),\xL^q(\Omega))$ for all $q \in [1,\min(\gamma,2)]$.
Using again \eqref{eq:corr_stap_bis},  the sequence $(\bfu\exm)_{m \in \xN}$ is bounded in any discrete norm and the same holds for the sequence $(\deltap\exm)_{m \in \xN}$ by Lemma \ref{lmm:sch:controle:p}. By the Bolzano-Weiertrass theorem and a norm equivalence argument, there exists a subsequence of  $(\bfu\exm,\deltap\exm)_{m \in \xN}$ which tends, in any discrete norm, to a limit $(\bfu,\deltap)$. Passing to the limit cell-by-cell in \eqref{eq:correction_scheme}, one obtains that $(\bfu,\deltap)$ is a solution to \eqref{eq:limit_correction_scheme}.
Since this solution is unique, the whole sequence converges, which concludes the proof.
\end{proof}
%
% -------------------------------------------------------------------
%
\bibliographystyle{plain}
\bibliography{Low_Mach_MAC_HLS}
\end{document}